\theoremstyle{definition}
\newtheorem{example}{Example}
\theoremstyle{plain}
\newtheorem{proposition}{Proposition}
\newtheorem{lemma}{Lemma}
\newtheorem{method}{Method}
\begin{document}

\title[On isospectral graphs]{On isospectral metric graphs}

\author[P.Kurasov]{Pavel Kurasov}

\author[J.Muller]{Jacob Muller}

\dedicatory{This work is dedicated to the memory of Sergey Naboko \\
-- outstanding mathematician, attentive Teacher,\\ kind friend and a great Man, who left us
too early.}

\begin{abstract}
A new class of isospectral graphs is presented. These graphs are isospectral with respect to
both the normalised Laplacian on the discrete graph and the standard differential Laplacian on the corresponding metric graph.
The new class of graphs is obtained by gluing together subgraphs with the Steklov maps possessing  special properties. 
It turns out that isospectrality is related to the degeneracy of the Steklov eigenvalues.
\end{abstract}

\maketitle

\section{Introduction}

Spectral geometry of metric graphs is a rapidly developing area of modern analysis joining together 
spectral theory of ordinary and partial differential equations,  theory of almost periodic functions, quasicrystals, analytic functions in
several variables with number theory and algebra of multivariate polynomials \cite{BeKu,KuBook,KuSa}. One of the most important directions is understanding how
topological and geometrical properties of a metric graph $ \Gamma $ (see precise definitions below) are reflected in the spectral properties of
the corresponding differential operators. The first step should be understanding these relations in the case of
the standard Laplacian -- the second order differential operator uniquely determined by the metric graph. We shall refer
to its spectrum as the spectrum of the metric graph $ \Gamma$.
Solution of the inverse spectral problem is then identical to the reconstruction of a metric
graph from its spectrum.
Note that we are interested in what is determined by the spectrum alone without taking into account additional (spectral) data like
the response operator, scattering matrix or the spectrum of any other operator associated with the same graph.

The following two results should be taken into account:
\begin{itemize}
\item  The spectrum determines the
metric graph if the edge lengths are rationally independent \cite{GuSm,KuNo}. 
\item The total length ({\it i.e.} the sum of edge lengths) and the Euler characteristic (or the number of independent cycles)
are uniquely determined by the spectrum \cite{KuJFA,KuArk}.
\end{itemize}
The first result implies that almost all graphs are uniquely determined by their spectra. Looking for 
 {\bf isospectral graphs} (sometimes called cospectral)  -- graphs
having identical spectra -- one should consider the case of rationally dependent edge lengths.
In particular one may look at the another extreme case of unilateral graphs (having all edge lengths equal to $1$).
Explicit examples of isospectal graphs
have been constructed adjusting Sunada construction \cite{Su}  originally developed for partial differential equations in domains to metric graphs \cite{GuSm,BaPa,OrBa}. In this approach
two isospectral graphs appear as subgraphs of a certain large symmetric graph. 

Discussing isospectral graphs one should always remember that the spectra of unilateral metric graphs
are determined by the spectra of the corresponding discrete normalised Laplacian matrices:
$$ L_N = \mathbb I - D^{-1/2} A D^{-1/2}, $$ 
where $ A $ is the adjacency matrix for the discrete graph $ G $ associated with $ \Gamma$ and $ D $ is the (diagonal) degree matrix for $ G$. 
Generic ({\it i.e.} not equal to $ \pi^2 m^2 , \; m \in \mathbb Z)$
eigenvalues $ \lambda_j $ of an unilateral metric graph $ \Gamma $ and the eigenvalues $ \mu_n $ of the normalised
Laplacian matrix $ L_N $ are related as \cite{Below1}
\begin{equation}
1- \cos \sqrt{\lambda_j} = \mu_n.
\end{equation}
Moreover, two unilateral metric graphs are isospectral if and only if the corresponding normalised Laplacian matrices are
isospectral and the graphs have the same number of connected components and independent cycles.
In particular isospectrality of normalised Laplacians does not necessarily imply isospectrality of
the corresponding unilateral metric graphs.

In the current article we shall look for isospectral unilateral metric graphs. It is not our goal to characterise all such graphs but rather
to understand their nature. To the best of our knowledge, the problem has not been solved neither for
normalised Laplacian matrices \cite{BuGr}, nor for usual Laplacian matrices \cite{HaSp}.
The spectra of unilateral metric graphs on up to $5$ vertices have been discussed in \cite{ChPi}, it was proven that the spectra
determine the graphs. 

We turned therefore to graphs on $6$ vertices and looked for isospectral pairs.
We found one such pair (see Section \ref{sec:ex}) and analysed the corresponding eigenfunctions
trying to find any transplantation mapping then to each other. 
Deep analysis of this pair of graphs led us
to interesting observations allowing to construct a wide family of isospectral metric graphs. Surprisingly this family gives
not known before normalised-Laplacian-isospectral discrete graphs.

Our methods are based on the analysis of Titchmarsh-Weyl $M$-functions associated with the subgraphs
and use ideas behind surgery principles in the spectral theory of metric graphs \cite{KuMaNa,KuNa,BeKeKuMu1,BeKeKuMu2}.
In contrast to the studies modifying Sunada construction derived isospectral families 
are glued together from the subgraphs having very special spectral properties.
 In some sense our approach is ideologically close to \cite{BuGr,Br},
but uses a different technique: instead of extensiv use of linear algebra we bring in methods coming from the spectral theory 
of ordinary differential operators. As a result constructed isospectral families are wider and often contain already known ones.
On the other hand it is not always possible to see the equivalence between already existing and newly proposed methods:
for example our Method \ref{Me2} reminds of Seidel switching \cite{Seidel,Br} but is not equivalent. Construction of
isospectral graphs from Cayley graphs \cite{Babai,LuSaVi} can also be generalised introducing symmetries based on $M$-functions, 
the corresponding graphs do not necessarily have any symmetries as metric spaces. 

\section{Laplacians on metric graphs and $M$-functions}

A {\bf metric graph} $ \Gamma $ is a collection of intervals 
$ E_n = [x_{2n-1}, x_{2n}] \subset \mathbb R, \; n =1 ,2, \dots, N $ -- the edges, -- joined
together at the vertices $ V_m , \; m =1,2, \dots, M $ seen as equivalence classes of the set of
all interval end points $ \{ x_j \}_{j=1}^{2N}. $ The intervals belong to different copies of $ \mathbb R$.
The end points belonging to the same vertex are identified.

The (differential) Laplace operator $ L $ is defined in the Hilbert space $ L_2 (\Gamma) = \oplus_{n=1}^N L_2 (E_n) $
on the set of functions from $ u \in  \oplus_{n=1}^N W_2^2 (E_n) $ 
satisfying at each vertex $ V_m $ standard conditions 
\begin{equation} \label{svc}
\left\{
\begin{array}{l}
x_i, x_j \in V_m \Rightarrow u(x_i) = u(x_j) ; \\[3mm]
\displaystyle \partial u (V_m) :=  \sum_{x_i \in V_m}  \partial u(x_j) = 0,
\end{array}
\right.
\end{equation}
where the oriented derivatives are defined in accordance with
$$ \partial u(x_{2n-1}) = \lim_{x \rightarrow x_{2n-1}} u'(x), \quad  \partial u(x_{2n}) = - \lim_{x \rightarrow x_{2n}} u'(x). $$
The extra sign for the right end points is required in order to treat all edges independently of their parametrisation.
The first condition in \eqref{svc}  implies that any function from the domain of the standard Laplacian is
not only continuous inside the edges (as a function from $ W_2^2$) but even on the whole graph $ \Gamma $.
The second condition, sometimes called Kirchhoff, ensures that the operator is self-adjoint.
It is not our goal to describe all possible vertex conditions (see \cite{KoSch,BeKu,KuBook}). 
Additionally we shall only use Dirichlet condition $ u(V_m) = 0 $
implying that the function is zero at all end-points joined at $ V_m $. 

On any metric graph $ \Gamma $ we choose one or several vertices to be contact vertices denoted by $ \partial \Gamma$.
Note that every inner point on an edge can be seen as a degree two vertex. For arbitrary nonreal $ \lambda, \; \Im \lambda \neq 0$
consider solutions of the differential equation
\begin{equation} \label{deq}
- u'' (x)  = \lambda u(x)
\end{equation}
satisfying standard vertex conditions at all (inner) vertices $ V_m \notin \partial \Gamma $ and continuous on $ \partial \Gamma$
(and hence on the whole $ \Gamma$). Every such solution is uniquely determines by its values on the contact set
$ u(V_m), \; V_m \in \partial \Gamma$. Otherwise the Laplacian with Dirichlet conditions on $ \partial \Gamma $ (and standard
conditions at all other vertices) would have non-real eigenvalues. We introduce the Titchmarsh-Weyl (matrix valued) $M$-function
associated with the graph $ \Gamma $ and the contact set $ \partial \Gamma$:
\begin{equation}
\mathbb M_\Gamma (\lambda): \{ u(V_m) \}_{V_m \in \partial \Gamma} \mapsto  \{ \partial u(V_m) \}_{V_m \in \partial \Gamma}
,
\end{equation}
where $  \partial u(V_m) = \sum_{x_j \in V_m} \partial u(x_j). $
$ \mathbb M_\Gamma (\lambda) $ so defined is a matrix-valued Herglotz-Nevanlinna function and
encodes information about all not-common eigenfunctions of the standard and Dirichlet Laplacians on $ \Gamma$
\cite{KuBook}.

For example the $M$-function for the interval $ \mathbf E $ of length $ 1$ with the contact set formed by both end-points
 is given by
$$ \mathbb M_{\bf E} (\lambda) =
\left(
\begin{array}{cc}
- k \cot k  & \displaystyle  \frac{k}{\sin k }  \\
 \displaystyle  \frac{k}{\sin k }  & - k \cot k  
 \end{array} \right). $$

The following two formulas express the $M$-functions through the traces of the standard (satisfying standard vertex conditions on $ \partial \mathbf \Gamma$
and Dirichlet (satisfying Dirichlet conditions on  $ \partial \mathbf \Gamma$) \cite{KuActa,KuNa,KuBook}
\begin{equation} \label{eq1PP}
 \mathbf M_{\mathbf \Gamma} (\lambda) = - \left( \sum_{n=1}^\infty \frac{\displaystyle \langle \psi_n^{\rm st} \vert_{\partial \Gamma}, \cdot \rangle_{\ell_2 (\partial \Gamma)} \psi_n^{\rm st} \vert_{\partial \Gamma}}{\lambda_n^{\rm st} - \lambda} \right)^{-1},
 \end{equation}
\begin{equation}  \label{eq2PP}
\mathbf M_{\mathbf \Gamma} (\lambda) -  \mathbf M_\Gamma (\lambda') = \sum_{n=1}^\infty \frac{\lambda - \lambda'}{(\lambda_n^D - \lambda) (\lambda_n^D - \lambda')}
\langle \partial \psi_n^D \vert_{\partial \Gamma}, \cdot \rangle_{\mathbb C^B} \partial \psi_n^D \vert_{\partial \Gamma} ,
 \end{equation}
 where $ \lambda_n^{\rm st} $ and $ \lambda_n^{\rm D} $ are the corresponding eigenvalues and $ \lambda' \in \mathbb R $ is an  arbitrary reference point not belonging to the spectra.
 The Dirichlet eigenvlalues $ \lambda_n^{/rm D}$ determine the singularities of $ \mathbb M_{\mathbf \Gamma} (\lambda) $,
 while $ \lambda_n^{\rm st} $ correspond to generalised zeroes of $ \mathbb M_{\mathbf \Gamma} (\lambda)$. 

It is clear that only the eigenfunctions with non-zero traces on the contact set contribute to the $M$-function. Such eigenfunctions are called {\bf detectable}
and the corresponding eigenvalues form {\bf detectable spectrum}. All other eigenfunctions and the corresponding spectrum are called {\bf non-detectable} or {\bf invisible}.
Non-detectable eigenfunctions satisfy both standard and Dirichlet conditions at contact vertices. Note that we treat the spectrum as a multiset so that
the same real number could be a detectable and a non-detectable eigenvalue at the same time if it is a multiple eigenvalue with some standard eigenfunctions
satisfying Dirichlet conditions on $ \partial \mathbf \Gamma$ and  some not.

The $M$-functions are Hermitian for almost all real values of $ \lambda $ as can be seen from the formulas \eqref{eq1PP} and \eqref{eq2PP}.
The eigenvalues of $ \mathbb M_{\mathbf \Gamma} (\lambda) $ will be called {\bf Steklov eigenvalues} by analogy with the Poincar\'e-Steklov problem
for elliptic partial differential equation in a domain. Note that so-defined Steklov eigenvalues depend on the spectral parameter $ \lambda$ as well as
the corresponding eigensubspaces, which we simply call {\bf Steklov subspaces}.
For regular $ \lambda $ Steklov eigenvalues and subspaces depend continuously on the spectral parameter $ \lambda$.

The graphs with equal $M$-functions will be called Steklov-equivalent. Such graphs are also known as  {\it isoscattering}, {\it isopolar}, or {\it isophasal} graphs \cite{BaSaSm2010}
as attaching infinite edges to contact points leads to the scattering matrix, which is essentially a Cayley transform of the $M$-function. 

The Steklov eigenvalues between the singular points depend continuously on the spectral parameter $ \lambda$ and are given by non-decreasing functions.
The detectable eigenvalues of the standard Laplacian on $ \mathbf \Gamma $ are obtained when the Steklov eigenvalues cross the zero line.
Note that these points may coincide with the singularities of $ \mathbb M(\lambda) $, therefore one speaks about generalised zeroes -- the singularities of the
inverse function $ - \mathbb M^{-1} (\lambda)$. Note that Steklov-equivalent graphs are not necessarily isospectral as their non-detectable eigenvalues may be different,

\section{Two isospectral graphs from the complete graph $ K_5$.} \label{sec:ex}

Searching for isospectral graphs we decided to inspect equilateral graphs on few vertices. The graphs on up to $5$ vertices have been
considered in \cite{ChPi}. We looked at graphs on $6$ vertices aiming to find isospectral pairs. There are altogether more than $100$ such graphs
and it was a tough job to analyse their spectra. We present here just the result of these tedious calculations.

The spectra of unilateral metric graphs are best described by zeroes of secular polynomials \cite{BaGa,KuSa,KuBook} determined by
\begin{equation}
P_G (z) = \det \Big( \mathbf E (z) - \mathbf S_{\rm v} \Big),
\end{equation}
where
$$ \mathbf E(z) = {\rm diag}\, \left( \begin{array}{cc} 0 & z \\
z & 0
\end{array} \right) $$
and $ \mathbf S_{\rm v} $ is the vertex scattering matrix. The spectrum of the Laplacian on the
unilateral graph $ \Gamma $ is obtained by putting $ z = e^{ik} $ as zeros of the trigonometric
polynomial
\begin{equation}
p_\Gamma (k) = P_G (e^{ik}).
\end{equation}
Only zeros of the secular polynomials are relevant, hence these polynomials should be treated
projectively so that two polynomials different by a multiplicative factor are equal.

For example the secular polynomial for the complete graph $ \mathbf{K}_5 $ is
\begin{equation}
P_{K_5} = (z-1 )^7 (z+1 )^5 (2z^2 + z + 2 )^4.
\end{equation}
Order of the zeroes  coincides with  the multiplicity of the corresponding non-zero eigenvalues.
For example $ z_1= 1 $ is a zero of multiplicity $7$ implying that $ \lambda = (2 \pi m)^2 $ is
an eigenvalue of multiplicity $ 7 = 1 + \beta_1(\mathbf{K}_5) $,
where $ \beta_1 $ denotes the number of independent cycles in the graphs  (the first Betti number).
The multiplicity of $ \lambda = 0 $ is just equal to $1$ as the graph is connected.

 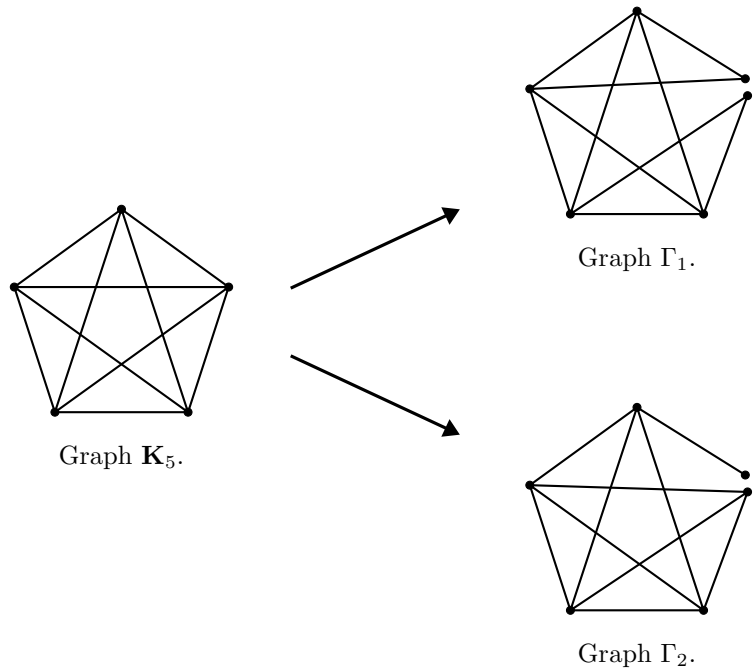
\begin{figure}[h]

\begin{tikzpicture}[scale=1.5]

\draw  [thick]  (-0.59,-0.8) to (0.59,-0.8);
\draw  [thick] (0.59,-0.8) to (0.95,0.31);
\draw  [thick] (0.95,0.31) to (0,1);
\draw  [thick] (0,1) to (-0.95,0.31);
\draw  [thick] (-0.95,0.31) to (-0.59,-0.8);

\draw  [thick]  (-0.59,-0.8) to (0.95,0.31);
\draw  [thick]  (-0.59,-0.8) to (0,1);

\draw  [thick] (0.59,-0.8) to (0,1);
\draw  [thick] (0.59,-0.8) to (-0.95,0.31);

\draw  [thick] (0.95,0.31) to (-0.95,0.31);

\draw[fill] (-0.59,-0.8) circle (1pt);       
\draw[fill] (0.59,-0.8)circle (1pt);    
\draw[fill] (0.95,0.31) circle (1pt);       
\draw[fill] (-0.95,0.31)circle (1pt);    
\draw[fill] (0,1) circle (1pt);     

\node at (0,-1.2) {Graph $ \mathbf K_5$.};

\draw[-Triangle, very thick](1.5, 0.3) -- (3, 1.);

\draw[-Triangle, very thick](1.5, -0.3) -- (3, -1.);

\begin{scope} [xshift=130, yshift=50]
\draw  [thick]  (-0.59,-0.8) to (0.59,-0.8);
\draw  [thick] (0.59,-0.8) to (0.98,0.25);
\draw  [thick] (0.96,0.4) to (0,1);
\draw  [thick] (0,1) to (-0.95,0.31);
\draw  [thick] (-0.95,0.31) to (-0.59,-0.8);

\draw  [thick]  (-0.59,-0.8) to (0.98,0.25);
\draw  [thick]  (-0.59,-0.8) to (0,1);

\draw  [thick] (0.59,-0.8) to (0,1);
\draw  [thick] (0.59,-0.8) to (-0.95,0.31);

\draw  [thick] (0.96,0.4) to (-0.95,0.31);

\draw[fill] (-0.59,-0.8) circle (1pt);       
\draw[fill] (0.59,-0.8)circle (1pt);    
\draw[fill] (0.96,0.4)circle (1pt);    
\draw[fill] (0.98,0.25) circle (1pt);       
\draw[fill] (-0.95,0.31)circle (1pt);    
\draw[fill] (0,1) circle (1pt);     
 
\node at (0,-1.2) {Graph $ \Gamma_1$.};

\end{scope}

       \begin{scope} [xshift=130, yshift=-50]
\draw  [thick]  (-0.59,-0.8) to (0.59,-0.8);
\draw  [thick] (0.59,-0.8) to (0.98,0.25);
\draw  [thick] (0.96,0.4) to (0,1);
\draw  [thick] (0,1) to (-0.95,0.31);
\draw  [thick] (-0.95,0.31) to (-0.59,-0.8);

\draw  [thick]  (-0.59,-0.8) to (0.98,0.25);
\draw  [thick]  (-0.59,-0.8) to (0,1);

\draw  [thick] (0.59,-0.8) to (0,1);
\draw  [thick] (0.59,-0.8) to (-0.95,0.31);

\draw  [thick] (0.98,0.25) to (-0.95,0.31);


\draw[fill] (-0.59,-0.8) circle (1pt);       
\draw[fill] (0.59,-0.8)circle (1pt);    
\draw[fill] (0.96,0.4)circle (1pt);    
\draw[fill] (0.98,0.25) circle (1pt);       
\draw[fill] (-0.95,0.31)circle (1pt);    
\draw[fill] (0,1) circle (1pt);     
 
\node at (0,-1.2) {Graph $ \Gamma_2$.};

\end{scope}

\end{tikzpicture}
 \caption{{Two isospectral graphs via $  \mathbf{K}_5$.}}
\label{FigK5}
\end{figure}

Let us consider the unilateral graphs $ \Gamma_1 $ and $ \Gamma_2$ obtained from $ \mathbf{K}_5 $ by
chopping one of the vertices into two. The new vertices have degrees $ 2, 2 $ and $ 1,3$ respectively.
The corresponding secular polynomials coincide,
\begin{equation}
P_{\Gamma_1} (z) = P_{\Gamma_2} (z) = (z-1 )^6 (z+1 )^4 (2z^2 + z + 2 )^3 (2z^4 + z^3 + 2 z^2 + z + 2),
\end{equation}
implying that the two connected  graphs are isospectral.

It would be natural to derive 
 isospectrality of the pair $ (\Gamma_1, \Gamma_2) $ using
symmetries of $ \mathbf K_5$; we provide an alternative explanation looking
at the $ M$-functions and analysing the corresponding
Steklov subspaces. In fact analysing the above explicit example we arrived at several
methods allowing construction of non-trivial isospectral graphs. These methods are described
in the following sections.

\section{ 
Understanding isospectrality looking at  Steklov subspaces} \label{sec:swap}

In this section we provide an explicit explanation why the graphs $ \Gamma_1$ and $ \Gamma_2 $ are
isospectral looking at the Steklov subspaces of the two subgraphs that differ them.
In some sense instead of analysing how the complete graph $ \mathbf K_5 $ is chopped
to get the pair, we shall analyse how to obtain the isospectral pair by extending
the graph $ \mathbf K_4$.

Both graphs $ \Gamma_1 $ and $ \Gamma_2 $ contain the complete graph $ \mathbf K_4 $.
Hence each of these graphs can be obtained by attaching a certain equilateral graph $ \mathbf{Q}_j $ to $ \mathbf K_4$.
Let us amend Figure \ref{FigK5}, so that our construction will become more transparent.

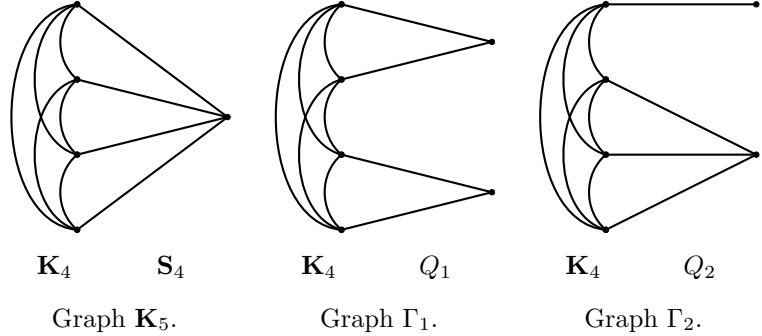
\begin{figure}[h]

\begin{tikzpicture}[scale=1]

\begin{scope} [xshift=-100]

\coordinate (a) at (0,0);
\coordinate (b) at (0,1);
\coordinate (c) at (0,2);
\coordinate (d) at (0,3);

\draw[fill] (a) circle (1pt);     
\draw[fill] (b) circle (1pt);    
\draw[fill] (c) circle (1pt);    
\draw[fill] (d) circle (1pt);      

\draw[thick,bend left=+50] (a) edge (b);
\draw[thick,bend left=+50] (b) edge (c);
\draw[thick,bend left=+50] (c) edge (d);

\draw[thick,bend left=+75] (a) edge (c);
\draw[thick,bend left=+75] (b) edge (d);

\draw[thick,bend left=+85] (a) edge (d);

\coordinate (e) at (0.,0);
\coordinate (f) at (0.,1);
\coordinate (g) at (0.,2);
\coordinate (h) at (0.,3);
\coordinate (i) at (2,1.5);
\coordinate (j) at (2,3);

\draw[thick] (e) edge (i);
\draw[thick] (f) edge (i);
\draw[thick] (g) edge (i);

\draw[thick] (h) edge (i);

\draw[fill] (e) circle (1pt);      
\draw[fill] (f) circle (1pt);      
\draw[fill] (g) circle (1pt);      
\draw[fill] (h) circle (1pt);      
\draw[fill] (i) circle (1pt);      


\node at (-0.3,-.5) {$ \mathbf K_4$};
\node at (1.25,-.5) {$ \mathbf S_4$};

\node at (0.5,-1.2)  {Graph $ \mathbf K_5$.};

\end{scope}

\coordinate (a) at (0,0);
\coordinate (b) at (0,1);
\coordinate (c) at (0,2);
\coordinate (d) at (0,3);

\draw[fill] (a) circle (1pt);     
\draw[fill] (b) circle (1pt);    
\draw[fill] (c) circle (1pt);    
\draw[fill] (d) circle (1pt);      

\draw[thick,bend left=+50] (a) edge (b);
\draw[thick,bend left=+50] (b) edge (c);
\draw[thick,bend left=+50] (c) edge (d);

\draw[thick,bend left=+75] (a) edge (c);
\draw[thick,bend left=+75] (b) edge (d);

\draw[thick,bend left=+85] (a) edge (d);

\coordinate (e) at (0.,0);
\coordinate (f) at (0.,1);
\coordinate (g) at (0.,2);
\coordinate (h) at (0.,3);
\coordinate (i) at (2,0.5);
\coordinate (j) at (2,2.5);

\draw[thick] (e) edge (i);
\draw[thick] (f) edge (i);

\draw[thick] (g) edge (j);
\draw[thick] (h) edge (j);

\draw[fill] (e) circle (1pt);      
\draw[fill] (f) circle (1pt);      
\draw[fill] (g) circle (1pt);      
\draw[fill] (h) circle (1pt);      
\draw[fill] (i) circle (1pt);      
\draw[fill] (j) circle (1pt);


\node at (-0.3,-.5) {$ \mathbf K_4$};
\node at (1.25,-.5) {$ Q_1$};

\node at (0.5,-1.2) {Graph $ \Gamma_1$.};

\begin{scope} [xshift=100]

\coordinate (a) at (0,0);
\coordinate (b) at (0,1);
\coordinate (c) at (0,2);
\coordinate (d) at (0,3);

\draw[fill] (a) circle (1pt);     
\draw[fill] (b) circle (1pt);    
\draw[fill] (c) circle (1pt);    
\draw[fill] (d) circle (1pt);      

\draw[thick,bend left=+50] (a) edge (b);
\draw[thick,bend left=+50] (b) edge (c);
\draw[thick,bend left=+50] (c) edge (d);

\draw[thick,bend left=+75] (a) edge (c);
\draw[thick,bend left=+75] (b) edge (d);

\draw[thick,bend left=+85] (a) edge (d);

\coordinate (e) at (0.,0);
\coordinate (f) at (0.,1);
\coordinate (g) at (0.,2);
\coordinate (h) at (0.,3);
\coordinate (i) at (2,1);
\coordinate (j) at (2,3);

\draw[thick] (e) edge (i);
\draw[thick] (f) edge (i);
\draw[thick] (g) edge (i);

\draw[thick] (h) edge (j);

\draw[fill] (e) circle (1pt);      
\draw[fill] (f) circle (1pt);      
\draw[fill] (g) circle (1pt);      
\draw[fill] (h) circle (1pt);      
\draw[fill] (i) circle (1pt);      
\draw[fill] (j) circle (1pt);


\node at (-0.3,-.5) {$ \mathbf K_4$};
\node at (1.25,-.5) {$ Q_2$};

\node at (0.5,-1.2)  {Graph $ \Gamma_2$.};

\end{scope}

\end{tikzpicture}
 \caption{{Two isospectral graphs extending $ \mathbf K_4$.}}
\label{FigK4}
\end{figure}

The spectra of these graphs can be calculated using their $M$-functions
associated with the $4$ vertices belonging to the $ \mathbf K_4$ subgraph
on the left.

 All three graphs in Fig. \ref{FigK4} are divided by contact vertices into two or three separate graphs:
 $ \mathbf K_4 $ and star graphs $ \mathbf S_d$ with $ d=1,2,3,4$ edges. The $M$-functions for the
 composed graphs are equal to the sums of $M$-functions of the subgraphs.
 
 Let us calculate the $M$-functions for the subgraphs appearing in the decomposition. Each time we shall
 use that the $M$-functions commute with the permutations of the end points implying that the Steklov
 subspaces are invariant under permutations and can only be equal to
 $$ \mathcal L \mathbf 1 \quad \mbox{and} \quad ( \mathcal L \mathbf 1 )^\perp , $$
 where $ \mathbf 1 $ denotes the vector with all coordinates equal to $ 1 $ and the size
 depending on the number of contact vertices in the subgraph. Hence, there are just two
 Steklov eigenvalues for each subgraph.
 
 \subsection*{\bf Complete graph $ \mathbf K_4$.}
The  $M$-function associated with all vertices in $ \mathbf K_4 $ is equal to the sum of $M$-functions
for the edges:
 \begin{equation}
 \mathbb M_{\mathbf K_4} (\lambda) =
 \left(
 \begin{array}{cccc}
 - 3 k \cot k  & \displaystyle  \frac{k}{\sin k } & \displaystyle  \frac{k}{\sin k } & \displaystyle  \frac{k}{\sin k } \\[2mm]
  \displaystyle  \frac{k}{\sin k } & - 3 k \cot k  &  \displaystyle  \frac{k}{\sin k } & \displaystyle  \frac{k}{\sin k } \\[2mm]
    \displaystyle  \frac{k}{\sin k } & \displaystyle  \frac{k}{\sin k } &  - 3 k \cot k  & \displaystyle  \frac{k}{\sin k } \\[2mm]
   \displaystyle  \frac{k}{\sin k } & \displaystyle  \frac{k}{\sin k } & \displaystyle  \frac{k}{\sin k } &  - 3 k \cot k   \\
 \end{array}
 \right).
 \end{equation}
 Of course one needs to pay attention to which contact points are joined by each edge.
 The Steklov eigenvalues  denoted by $ \mu_{1,2}^{\mathbf K_4} 
 $ are
 $$ \mu_1 ^{\mathbf K_4} =  - 3k \cot k + 3 \frac{k}{\sin k} =  3 k \tan k/2, \quad \mu_2 ^{\mathbf K_4} =  - 3k \cot k - \frac{k}{\sin k} . $$
 Denoting by $ \mathbf P_{\mathbf 1} $ and $ \mathbf P_{\mathbf 1^\perp} $ the orthogonal projectors on the Steklov subspaces, 
 the $M$-function can be written as
 \begin{equation} \label{MK}
 \mathbb M_{\mathbf K_4} (\lambda) = \mu_1^{\mathbf K_4} (\lambda)  \mathbf P_{\mathbf 1} + 
 \mu_2^{\mathbf K_4} (\lambda) \mathbf P_{\mathbf 1^\perp} .
 \end{equation}
 
  \subsection*{\bf Star graph $ \mathbf S_d$.}
 The eigenvalues of the $M$-function for the star graph $ \mathbf S_d $ associated with all degree one vertices 
 do not depend
 on the valency $ d$
 \begin{equation}
 \mu_1^{\mathbf S} (\lambda) =  k \tan k,\quad  \mu_2^{\mathbf S}(\lambda)  = - k \cot k.
\end{equation}
The corresponding $M$-function also commutes with the permutations and therefore
 can be written similar to \eqref{MK}:
  \begin{equation} \label{MS}
 \mathbb M_{\mathbf S_d} (\lambda) = \mu_1^{\mathbf S_d} (\lambda) \mathbf P_{\mathbf 1} + 
 \mu_2^{\mathbf S_d} (\lambda)  \mathbf P_{\mathbf 1^\perp} .
 \end{equation}
The dependence on $ d$ is through the dimension of the vector $ \mathbf 1$.

 The spectrum of $ \mathbf K_5$ associated with the eigenfunctions not identically equal to zero
 at the contact vertices is given by the equation
 $$ \det \Big(  \mathbb M_{\mathbf K_4} (\lambda) +  \mathbb M_{\mathbf S_4} (\lambda)  \Big) = 0, $$
  which, taking into account \eqref{MK} and \eqref{MS}, reduces to
  \begin{equation}
  \left\{
  \begin{array}{l}
\displaystyle  \mu_1^{\mathbf K_4} + \mu_1^{\mathbf S} =  3 k \tan k/2 +  k \tan k = 0, \\[3mm]
\displaystyle  \mu_2^{\mathbf K_4} + \mu_2^{\mathbf S} = - 4k \cot k - \frac{k}{\sin k}  = 0.
\end{array} \right.
  \end{equation}
  We get scalar equations as the eigensubspaces for $  \mathbb M_{\mathbf K_4} (\lambda) $ and  $
   \mathbb M_{\mathbf S_4} (\lambda) $ coincide.
  The first equation determines simple eigenvalues, while the second one leads to triple ones.
  Additional eigenvalues are due to eigenfunctions equal to zero at all contact points. Such
  eigenvalues are of the form  $ (2 \pi m)^2, m =1,2, \dots$ and have multiplicity $ \beta_1 (\mathbf K_5) = 6$.

Let us turn to the graphs $ \Gamma_j $. The $M$-functions for $ \mathbf Q_1 $ and $ \mathbf Q_2 $ are expressed using
the $M$-functions of $1,2,$ and $3$-star graphs:
$$
\begin{array}{ccl}
\mathbb M_{\mathbf Q_1} (\lambda) & = & \displaystyle \mu_1^{\mathbf S} \mathbf P_{(1,1,0,0)} 
+  \mu_2^{\mathbf S} \mathbf P_{(1,-1,0,0)} 
+  \mu_1^{\mathbf S} \mathbf P_{(0,0,1,1)} 
+  \mu_2^{\mathbf S} \mathbf P_{(0,0,1,-1)} \\
& = & \mu_1^{\mathbf S} \Big( \mathbf P_{(1,1,1,1)} +  \mathbf P_{(1,1,-1,-1)} \Big)
+  \mu_2^{\mathbf S} \Big( \mathbf P_{(1,-1,0,0)}  + \mathbf P_{(0,0,1,-1)} \Big), \\[3mm]
\mathbb M_{\mathbf Q_2} (\lambda) & = & \displaystyle \mu_1^{\mathbf S} \mathbf P_{(1,0,0,0)} 
+  \mu_1^{\mathbf S} \mathbf P_{(0,1,1,1)} 
+  \mu_2^{\mathbf S} \mathbf P_{(0,2,-1,-1)} 
+  \mu_2^{\mathbf S} \mathbf P_{(0,0,1,-1)} \\
& = & \mu_1^{\mathbf S} \Big( \mathbf P_{(1,1,1,1)} +  \mathbf P_{(3,-1,-1,-1)} \Big)
+  \mu_2^{\mathbf S} \Big( \mathbf P_{(0,2,-1,-1)}  + \mathbf P_{(0,0,1,-1)} \Big).
\end{array}
$$
The main difference to the previous example is that both eigenvalues have multiplicity $2$.
The corresponding Steklov eigensubspaces  are different, also  
 the vectors $ \mathbf 1 = (1,1,1,1) $ and  $ (0,0,1,-1)$ are common for $ \mathbf Q_1 $ and $ \mathbf Q_2$.
The two $M$-functions can be obtained from each other by swapping the Steklov subspaces.

The detectable spectrum is given by
$$ \det \Big( \mathbb M^{\mathbf K_4} (\lambda) + \mathbb M^{\mathbf Q_j} (\lambda) \Big) = 0 . $$
The eigensubspaces corresponding to the four (non-distinct) eigenvalues of $  \mathbb M_{\mathbf K_4} (\lambda) $
and each $  \mathbb M_{\mathbf Q_j} (\lambda) $ can be chosen equal, also these subspaces depend on $ j = 1,2$.
Therefore the above equation reduces to three scalar equations
\begin{equation}
\left\{
\begin{array}{l}
\displaystyle  \mu_1^{\mathbf K_4} + \mu_1^{\mathbf S} =  3 k \tan k/2 +  k \tan k = 0, \\[3mm]
\displaystyle  \mu_2^{\mathbf K_4} + \mu_1^{\mathbf S} = - 3k \cot k - \frac{k}{\sin k} + k \tan k = 0, \\[3mm]
\displaystyle  \mu_2^{\mathbf K_4} + \mu_2^{\mathbf S} = - 4k \cot k - \frac{k}{\sin k}  = 0.
\end{array} \right.
  \end{equation}
  The first two equations determine simple eigenvalues while the third one - double. The equations are identical for
  the two graphs, but the corresponding subspaces are different:
  \begin{equation}
  \begin{array}{ll} 
  \mathbf \Gamma_1: &  \mathcal L \{(1,1,1,1)\}, \\
 & \mathcal L \{(1,1,-1,-1)\}, \\
 & \mathcal L \{(1,-1,0,0),(0,0,1,-1)\};
    \end{array} \quad 
    \begin{array}{ll} 
  \mathbf \Gamma_2: & \mathcal L \{(1,1,1,1)\}, \\
 & \mathcal L \{(3,-1,-1,-1)\}, \\
 & \mathcal L \{(0,2,-1,-1),(0,0,1,-1)\}.
    \end{array} 
    \end{equation}
    The invisible spectrum is formed by the
    eigenvalues $ (2 \pi m)^2, \; m= 1,2, \dots $ with the multiplicity equal to the
    genus of the two graphs $ \beta_1 (\Gamma_1) = \beta_1 (\Gamma_2) = 5$.
    
    Both $M$-functions for $ \Gamma_1 $ and $ \Gamma_2 $ are obtained from the $M$-function for $ \mathbf S_4 $ by
    changing the eigenvalue on a certain one-dimensional eigensubspace from $ \mu_2^{\mathbf S} $
    to $ \mu_1^{\mathbf S}. $ Note that the one-dimensional subspaces are different for $ \Gamma_1 $ and
    $ \Gamma_2$ but this does not affect the spectrum of the composed graph. 
    So the mechanism  behind isospectrality of this pair is as follows: 
    the $M$-function for the original graph ($\mathbf K_5$ in our example) is given by a sum
    of the $M$-functions for two subgraphs sharing the same 
    Steklov eigensubspace, one having dimension at least $2$, 
    the $ M$-functions for the isospectral pair are obtained by amending the eigenvalue
    on a certain one-dimensional subspace of the distinguished Steklov eigensubspace.
    The key point is that such procedure gives $M$-functions associated with metric graphs
    obtained by chopping the original graph.

    \begin{example}
    The graph $ \mathbf K_4 $  can be substituted with the graph $ \mathbf S_4$, since the only property of $ \mathbf K_4$
    which was used is that
    the $M$-function commutes with the rotations. This gives us two isospectral graphs $ \Gamma_1'$ and $ \Gamma_2'$ 
    presented in Fig. \ref{Fig8}.
    \begin{figure}[h]

\begin{tikzpicture}[scale=1]

\coordinate (a) at (0,0);
\coordinate (b) at (0,1);
\coordinate (c) at (0,2);
\coordinate (d) at (0,3);
\coordinate (aa) at (-2,1.5);

\draw[fill] (a) circle (1pt);     
\draw[fill] (b) circle (1pt);    
\draw[fill] (c) circle (1pt);    
\draw[fill] (d) circle (1pt);   
\draw[fill] (aa) circle (1pt);      

\draw[thick] (a) edge (aa);
\draw[thick] (b) edge (aa);
\draw[thick] (c) edge (aa);

\draw[thick] (d) edge (aa);

\coordinate (e) at (0.,0);
\coordinate (f) at (0.,1);
\coordinate (g) at (0.,2);
\coordinate (h) at (0.,3);
\coordinate (i) at (2,0.5);
\coordinate (j) at (2,2.5);

\draw[thick] (e) edge (i);
\draw[thick] (f) edge (i);

\draw[thick] (g) edge (j);
\draw[thick] (h) edge (j);

\draw[fill] (e) circle (1pt);      
\draw[fill] (f) circle (1pt);      
\draw[fill] (g) circle (1pt);      
\draw[fill] (h) circle (1pt);      
\draw[fill] (i) circle (1pt);      
\draw[fill] (j) circle (1pt);


\node at (-0.3,-.5) {$ \mathbf S_4$};
\node at (1.25,-.5) {$ \mathbf Q_1$};

\node at (0.5,-1.2) {Graph $ \Gamma_1'$.};

\begin{scope} [xshift=130]

\coordinate (a) at (0,0);
\coordinate (b) at (0,1);
\coordinate (c) at (0,2);
\coordinate (d) at (0,3);
\coordinate (aa) at (-2,1.5);

\draw[fill] (a) circle (1pt);     
\draw[fill] (b) circle (1pt);    
\draw[fill] (c) circle (1pt);    
\draw[fill] (d) circle (1pt);   
\draw[fill] (aa) circle (1pt);      

\draw[thick] (a) edge (aa);
\draw[thick] (b) edge (aa);
\draw[thick] (c) edge (aa);

\draw[thick] (d) edge (aa);

\coordinate (e) at (0.,0);
\coordinate (f) at (0.,1);
\coordinate (g) at (0.,2);
\coordinate (h) at (0.,3);
\coordinate (i) at (2,1);
\coordinate (j) at (2,3);

\draw[thick] (e) edge (i);
\draw[thick] (f) edge (i);
\draw[thick] (g) edge (i);

\draw[thick] (h) edge (j);

\draw[fill] (e) circle (1pt);      
\draw[fill] (f) circle (1pt);      
\draw[fill] (g) circle (1pt);      
\draw[fill] (h) circle (1pt);      
\draw[fill] (i) circle (1pt);      
\draw[fill] (j) circle (1pt);


\node at (-0.3,-.5) {$ \mathbf S_4$};
\node at (1.25,-.5) {$ \mathbf Q_2$};

\node at (0.5,-1.2)  {Graph $ \Gamma_2'$.};

\end{scope}

\end{tikzpicture}
 \caption{{Two isospectral graphs extending $ \mathbf S_4$.}}
\label{Fig8}
\end{figure}
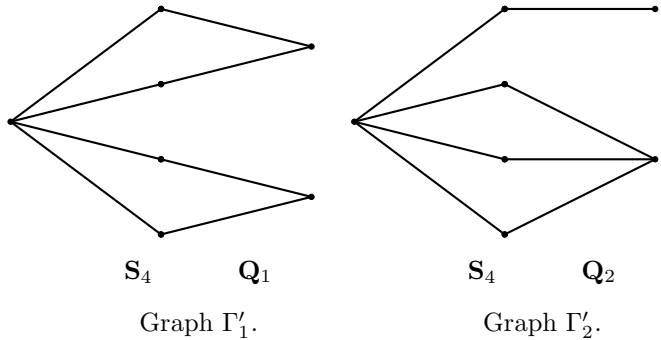
One may draw these graphs in a slightly better way as presented in Fig. \ref{Fig82},
indicating that this is probably the simplest pair of isospectral connected  metric graphs.

    \begin{figure}[h]

\begin{tikzpicture}[scale=2]

\coordinate (a) at (-0.5,0);
\coordinate (b) at (0,0);
\coordinate (c) at (0.5,0);

\draw[thick] (a)  circle (0.5);
\draw[thick] (c)  circle (0.5);

\draw[fill] (b) circle (1pt);   
\draw[fill] (1,0) circle (1pt);  
\draw[fill] (-1,0) circle (1pt);  

\node at (0.,-.7) {Figure-eight graph.};

\begin{scope} [xshift=75]

\coordinate (a) at (-1.,0);
\coordinate (b) at (0,0);
\coordinate (c) at (1,0);

\draw[thick] (a)  edge (c);

\draw[thick,bend left=+100] (b) edge (c);
\draw[thick,bend left=-100] (b) edge (c);

\draw[fill] (a) circle (1pt); 
\draw[fill] (b) circle (1pt); 
\draw[fill] (c) circle (1pt); 

\node at (0.,-0.7) {Watermelon on a stick graph.};

\end{scope}

\end{tikzpicture}
 \caption{The simplest pair of isospectral metric graphs.}
\label{Fig82}
\end{figure}
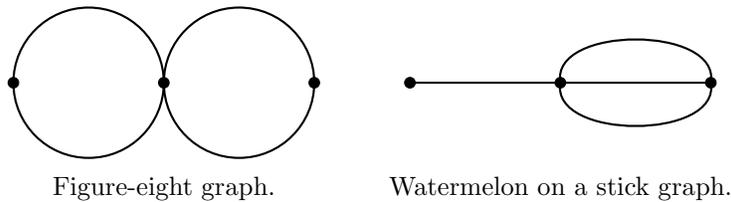

    \end{example}
    
    In what follows we formulate several principles which can be used to construct isospectral graphs.
    
\section{Method  $1$: extending isospectral graphs}

The method below requires that one isospectral pair is already known, but allows one to
get an infinite family of isospectral pairs by extension.

\begin{method}[Extending of isospectral pairs]  \label{Meth1}
Assume that $ \mathbf R_1 $ and $ \mathbf R_2 $ are two  Stek\-lov-equivalent graphs (with identical $M$-functions), which in addition are isospectral.
Then gluing these graphs to an arbitrary graph $ \mathbf K $ yields a new pair of isospectral graphs.
\end{method}

With this observation in mind we shall always try to identify common part in any isospectral pair and
check whether there exists a simpler isospectral pair behind.

We prove first one Lemma which will be used several times below.

\begin{lemma} \label{Le1}
Assume that $ M$-functions $ \mathbf M_{\mathbf \Gamma_j} (\lambda) $ for two metric graphs $ \mathbf \Gamma_1 $ and $ \mathbf \Gamma_2 $  
are unitary equivalent
\begin{equation}
\mathbb M_{\Gamma_1} (\lambda)  \stackrel{\rm unitary}{\sim}  \mathbb M_{\Gamma_2} (\lambda)
.
\end{equation}
 Then the graphs are isospectral if and only if 
the graphs have the same non-detectable spectra.
\end{lemma}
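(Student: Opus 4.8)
The plan is to reduce the statement to the decomposition of the spectrum of the standard Laplacian into its detectable and non-detectable parts, understood as multisets. As recalled in the excerpt, the full spectrum of $\Gamma_j$ splits as the multiset union of the detectable spectrum (carried by eigenfunctions with non-zero traces on $\partial\Gamma_j$, hence encoded in the $M$-function) and the non-detectable spectrum (eigenfunctions vanishing on the contact set together with their normal derivatives). So first I would record the multiset identity
$$ \sigma(\Gamma_j) = \sigma_{\rm det}(\Gamma_j) \sqcup \sigma_{\rm ndet}(\Gamma_j), \quad j = 1,2. $$

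Next I would show that unitary equivalence of the $M$-functions forces the detectable spectra to coincide. Unitary equivalence means that for each regular $\lambda$ one has $\mathbb{M}_{\Gamma_2}(\lambda) = U(\lambda)\,\mathbb{M}_{\Gamma_1}(\lambda)\,U(\lambda)^{*}$ with $U(\lambda)$ unitary (in the applications $U$ is even constant, swapping Steklov subspaces). Conjugation preserves the spectrum of the matrix for every $\lambda$, so the two $M$-functions share the same Steklov eigenvalues $\mu_i(\lambda)$, with the same multiplicities, as functions of the spectral parameter. Since the detectable eigenvalues are exactly the generalised zeros of the $M$-function — the points where some Steklov branch $\mu_i(\lambda)$ crosses the zero line, equivalently the singularities of $-\mathbb{M}^{-1}(\lambda)$, counted with the dimension of the associated Steklov subspace — identical branches yield generalised zeros at the same $\lambda$ and of the same order. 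This gives $\sigma_{\rm det}(\Gamma_1) = \sigma_{\rm det}(\Gamma_2)$ as multisets.

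Combining the two observations, and using that multiset union over a discrete set with finite multiplicities is cancellative, the graphs are isospectral, $\sigma(\Gamma_1) = \sigma(\Gamma_2)$, if and only if $\sigma_{\rm ndet}(\Gamma_1) = \sigma_{\rm ndet}(\Gamma_2)$, which is precisely the claim.

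The hard part will be the bookkeeping of multiplicities at the generalised zeros, in particular where a zero of a Steklov branch coincides with a pole of $\mathbb{M}_{\Gamma_j}(\lambda)$, i.e.\ a Dirichlet eigenvalue. To make this rigorous I would lean on the trace formulas \eqref{eq1PP} and \eqref{eq2PP}: formula \eqref{eq2PP} identifies the poles of $\mathbb{M}_{\Gamma_j}$ through the Dirichlet normal traces $\partial\psi_n^{D}\vert_{\partial\Gamma}$, while \eqref{eq1PP} identifies the standard (detectable) eigenvalues as the zeros, so that the order of each generalised zero matches the multiplicity of the corresponding detectable eigenvalue. Unitary equivalence preserves both the pole and the zero structure of the $M$-function, so these multiplicities transfer verbatim. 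A secondary subtlety, which the multiset formulation is designed to absorb, is that a single real number may be simultaneously a detectable and a non-detectable eigenvalue; hence the decomposition and the final cancellation must be carried out strictly at the level of multisets.
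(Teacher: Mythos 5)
Your proposal is correct and follows essentially the same route as the paper's own (much terser) proof: unitary equivalence together with the representation \eqref{eq1PP} forces the detectable spectra to coincide as multisets, and the multiset decomposition of the spectrum into detectable and non-detectable parts then yields the stated equivalence by cancellation. The paper leaves the multiplicity bookkeeping at generalised zeros implicit, which your final paragraph simply makes explicit.
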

\begin{proof}
It is implicit from the statement of the Lemma  that the contact sets for $ \Gamma_1 $ and $ \Gamma_2 $ have the same dimension.
Unitary equivalence of the $M$-functions 
together with the explicit formula \eqref{eq1PP} imply that the detectable spectra
of $ \mathbf \Gamma_1 $ and $ \mathbf \Gamma_2 $ coincide. Then isospectrality of the graphs imply is equivalent to the
coincidence of the
non-detectable spectra. Remember that the spectrum is treated as a multiset.
\end{proof}

\begin{proof}[Proof of Method \ref{Meth1}]
We denote by $ \mathbf \Gamma_j , \; j = 1,2,$ the graphs ofiained by gluing $ \mathbf  K$ and $  \mathbf R_j$.
To prove that the graphs $  \mathbf  \Gamma_j $  are isospectral it is convenient to
 divide the eigenfunctions and hence the spectra $ \Sigma (\Gamma) $ into two sets:
 \begin{itemize}
 \item Localised eigenfunctions -- the eigenfunctions supported by the subgraphs $  \mathbf R_j $;
 \item Delocalised eigenfunctions -- the eigenfunctions having support not localised to $  \mathbf R_j $.
 \end{itemize}
 If the eigenvalue   is simple, then the eigenfunction is unique and it is clear whether it is localised or not.
 In the case of multiple eigenvalues $ \lambda_j $ one should examine whether the corresponding  eigenfunctions
 can be chosen localised to 
 $  \mathbf R_j $ or not. Then the multiplicity $ \dim_{\rm loc} (\lambda_j)$  of the localised eigenvalue is the maximal number of linearly independent
 localised eigenfunctions and coincides with the multiplicity of non-detectable eigenvalues on $ \mathbf R_j$. The complementing dimension determines the nonlocalised multiplicity $ \dim_{\rm n/loc} (\lambda_j) $
 $$ \dim_{\rm n/loc} (\lambda_j)  = \dim (\lambda_j) - \dim_{\rm loc} (\lambda_j) . $$

 The invisible eigenfunctions on $  \mathbf R_1 $ and $  \mathbf R_2 $ have the same eigenvalues including multiplicities by Lemma \ref{Le1}.
  It follows that
the localised spectra of $  \mathbf \Gamma_1 $ and $  \mathbf \Gamma_2$ coincide as the corresponding eigenfunctions
 are just invisible eigenfunctions on $  \mathbf R_1 $ and $ \mathbf R_2$ respectively, extended by zero to the rest of $ \mathbf \Gamma_j$. 
 
 Let us prove that the spectra corresponding to delocalised eigenfunctions coincide as well.  Consider any such eigenfunction $ \psi_{\lambda_j} $
 on $  \mathbf \Gamma_1 $. Then an eigenfunction $ \psi'_{\lambda_j} $ on $  \mathbf \Gamma_2 $ can be constructed having the same values on $  \mathbf K $:
 $$ \psi_{\lambda_j} \vert_{\mathbf K} =  \psi'_{\lambda_j} \vert_{\mathbf K}. $$
 The function is extended to $  \mathbf R_2 \subset \Gamma_2 $ having the same values on $ \partial  \mathbf R_2 $ 
 and therefore having the same
 derivatives on $ \partial  \mathbf R_2 $ as $  \mathbf R_1 $ and $ \mathbf R_2 $ have identical $M$-functions. 
 It follows that $  \psi'_{\lambda_j} $ constructed in this way
 satisfies standard vertex conditions everywhere on $  \mathbf \Gamma_2$. It is clear that this map is injective and the roles of $ \mathbf \Gamma_1 $ and $  \mathbf \Gamma_2 $
 can be exchanged, hence even multiplicities of the eigenvalues coincide.
\end{proof}

 \section{Method $2$: exchanging Steklov-equivalent subgraphs}
 
 \begin{method}[Exchanging Steklov-equivalent subgraphs] \label{Me2}
 Assume that a metric graph $  \mathbf \Gamma $ contains two subgraphs $  \mathbf R_1 $ and $  \mathbf R_2$ with equal
 $M$-functions:
 \begin{equation}
 \mathbb M_{ \mathbf R_1} (\lambda) = \mathbb M_{ \mathbf R_2} (\lambda).
 \end{equation}
 Then exchanging the subgraphs $  \mathbf R_1 $ and $  \mathbf R_2$ one obtains a graph $  \mathbf \Gamma'$ isospectral to
 $  \mathbf \Gamma$.
 \end{method}
 Note that we do not require that the subgraphs $  \mathbf R_1 $ and $  \mathbf R_2 $ are isospectral.
 
 \begin{proof}
 As before we divide the eigenfunctions into localised and delocalised ones.

 The localised eigenfunctions $ \psi_{\lambda_j} $ and $ \psi_{\lambda_j}' $ on $  \mathbf \Gamma $ and $ \mathbf  \Gamma'$ respectively, are 
 essentially the same -- they are mapped in accordance to:
 \begin{equation}
 \psi_{\lambda_j}' \vert_{\mathbf R_i} =  \psi_{\lambda_j}\vert_{\mathbf R_i} .
 \end{equation}
 Note that in the formula above we identified the points belonging to the same subgraphs $  \mathbf R_i $
 belonging to $  \mathbf \Gamma $ and $  \mathbf \Gamma'$. The supports of localised eigenfunctions move together
 with the subgraphs $  \mathbf  R_j$.
 
 The delocalised eigenfunctions are identical outside the subgraphs
  \begin{equation}
 \psi_{\lambda_j}' \vert_{\mathbf \Gamma'\setminus( \mathbf R_1 \cap \mathbf R_2)} =  \psi_{\lambda_j}\vert_{\mathbf \Gamma \setminus(\mathbf  R_1 \cap\mathbf  R_2)} ,
 \end{equation}
 where we identified points in $ \mathbf \Gamma $ and $ \mathbf \Gamma' $ outside the subgraphs.
 The function $ \psi_{\lambda_j}' $ is continued inside the subgraphs $ \mathbf R_j $ having the same boundary
 values at the contact points $ \partial \mathbf R_{1,2} $ but exchanging the subgraphs:
 \begin{equation} 
 \psi_{\lambda_j}' \vert_{\partial \mathbf R_1} =  \psi_{\lambda_j} \vert_{\partial \mathbf R_2}, \quad \psi_{\lambda_j}' \vert_{\partial \mathbf R_2} =  \psi_{\lambda_j} \vert_{\partial \mathbf R_1}.
 \end{equation}
 For each of the subgraphs there is always a solution of \eqref{deq} having the above values at the contact points.
 Even if the $M$-function is singular the values at contact points satisfy the conditions ensuring solvability 
 of the problem.
 As the subgraphs have equal $M$-functions,  we have the same relation for the derivatives 
 \begin{equation} 
 \partial \psi_{\lambda_j}' \vert_{\partial \mathbf R_1} =   \partial  \psi_{\lambda_j} \vert_{\partial \mathbf R_2}, \quad  \partial \psi_{\lambda_j}' \vert_{\partial \mathbf R_2} =   \partial  \psi_{\lambda_j} \vert_{\partial \mathbf R_1},
 \end{equation}
 and the extended function is an eigenfunction on $ \Gamma' $.

 The mappings of the localised and non-localised eigenfunctions are clearly injective and the roles of $ \mathbf \Gamma $ and $ \mathbf \Gamma'$
 can be exchanged, hence these graphs are  isospectral.
 \end{proof}
 
 It is clear that instead of a pair of subgarphs one may use any number of such subgraphs
 and permute them arbitrarily leading to families of isospectral graphs.
  
 In the rest of this section we shall discuss how  graphs with equal $ M$-functions may be obtained. It is not our
 goal to characterise all pairs of Steklov-equivalent graphs.
 
 \subsection*{Steklov-equivalent graphs}
 Probably the simplest pair of Steklov-equivalent graphs can be obtained by modifying our example of isospectral 
 graphs (see Fig. \ref{Fig83}).
      \begin{figure}[h]

\begin{tikzpicture}[scale=2]

\coordinate (a) at (-0.5,0);
\coordinate (b) at (0,0);
\coordinate (c) at (0.5,0);

\draw[thick] (0,0) .. controls (1,1.) and (2,0.2) .. (0,0);
\draw[thick] (0,0) .. controls (1,-1.) and (2,-0.2) .. (0,0);


\draw[fill] (b) circle (1pt);   
\draw[fill] (1.15,0.4) circle (1pt);   
\draw[fill] (1.15,-0.4) circle (1pt);   

\node at (-0.2,0) {$ \partial R_1$};

\node at (0.,-.7) {Graph $R_1$.};

\begin{scope} [xshift=75]

\coordinate (a) at (1.,0.5);
\coordinate (b) at (0,0);
\coordinate (c) at (1,-0.5);

\draw[thick] (a)  edge (b);
\draw[thick] (c)  edge (b);

\draw[thick,bend left=+25] (b) edge (c);
\draw[thick,bend left=-25] (b) edge (c);

\draw[fill] (a) circle (1pt); 
\draw[fill] (b) circle (1pt); 
\draw[fill] (c) circle (1pt); 

\node at (-0.2,0) {$ \partial R_2$};

\node at (0.,-0.7) {Graph $R_2$.};

\end{scope}

\end{tikzpicture}
 \caption{Steklov-equivalent metric graphs.}
\label{Fig83}
\end{figure}
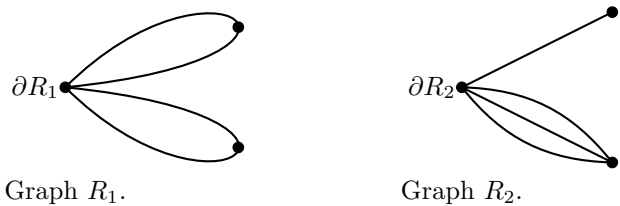
One can find a lot of examples of Steklov-equivalent graphs in the literature where they are often
called  isoscattering graphs (see \cite{BaSaSm2010}).
Note that most of these examples come from isospectral graphs implying that the graphs
are not only Steklov-equivalent, but also share the same non-detectable spectrum.

 \subsection*{Steklov-equivalent graphs via inner symmetries} 
 Following \cite{BoKu} we consider the case where the graph $\mathbf  R$ has a 
 non-trivial (inner) symmetry $g$ 
 not affecting the contact points:
 \begin{equation}
 \label{defg}
  g \mathbf R = \mathbf R, \quad  g \neq \mathbb I_{\mathbf R}, \quad g \vert_{\partial \mathbf R} = \mathbb I_{\partial \mathbf R},
  \end{equation}
 where $ \mathbb I $ denotes the identity transformations.
 Since the graph is finite, there exists an integer $ n \geq 2 $ such that $ g^n = \mathbb I_{\mathbf R}$.
 Consider any solution $ u $ to the differential equation \eqref{deq} which is continuous on $ \mathbf R $ and 
 satisfies standard conditions at non-contact vertices $ V_m \notin \partial \mathbf  R$.
It is clear that $ gu, g^2 u, \dots,  g^{n-1} u $ are also solutions.
We introduce 
 $$ u_m (x) : = \sum_{i=0}^n e^{\frac{2\pi}{n} im} g^{i} u (x) . $$
 It is clear that $ u_0 (x) $ is invariant under $ g $
 $$ g u_0(x) = u(x) $$
 while all other functions $ u_m (x), m = 1,2, \dots, n-1 $ are quasi-invariant:
 $$ g u_m (x) = e^{- \frac{2 \pi}{n} m } u_m (x). $$
 Then $ g \vert_{\partial R} = \mathbb I_{\partial R} $ implies that
 $$ u_m \vert_{\partial R} \equiv 0, \quad m =1,2, \dots, n-1 .$$
 Therefore calculating $ \mathbb M_{\mathbf R} (\lambda) $ only the symmetric solution $ u_0 (x) $ may be
 used.
 
 Consider any point $ x_0 \in \mathbf R $ not invariant under $ g$ and denote by $ x_j $ its images:
 $$ x_j := g^j x_0, \quad j =1,2, \dots, n-1. $$
 The function $ u_0 $ attains the same values at all $ x_j, j =0,1,2,\dots, n-1$, hence 
 joining these points together into a vertex $ V_0 $ will not affect the symmetric solutions:
 the function $ u_0 $ is not only continuous at $ V_0 $ but the sum of oriented derivatives
 is zero. It does not matter whether the original point $ x_0 $ was an inner point on an edge,
 or a vertex. Let us denote the graph obtained from $ \mathbf R $ by introducing the new vertex $ V_0 =
 \{ x_0, x_1, \dots, x_{n-1} \} $ by $ \mathbf R'$. Note that the new graph is invariant under the same symmetry $g$
(properly  understood).  It is clear that the graphs $ \mathbf R $ and $ \mathbf R' $ have the same
 $M$-functions but their non-detectable spectra need not be equal as the quasi-invariant eigenfunctions may 
 fail to satisfy standard conditions at the new vertex $ V_0 $.
 
 We illustrate this construction with one explicit example appeared already in \cite{KuSt}.
 We take the cycle graph with two opposite points as contact vertices. The symmetry $ g $
 is then the transformation exchanging the two edges keeping the vertices. Joining together
 any two symmetric points inside the edges turns the graph into figure-eight graph
 with the most remote points as contact vertices. It is clear that the spectrum of
 the second graph depends on the position of the point $ x_0 $ on the edges.
 Hence these two graphs are Steklov-equivalent but not isospectral.

       \begin{figure}[h]

\begin{tikzpicture}[scale=2]

\coordinate (a) at (-1,0);
\coordinate (b) at (1,0);

\draw[thick,bend left=+50] (a) edge (b);
\draw[thick,bend left=-50] (a) edge (b);

\draw[fill] (b) circle (1pt);   
\draw[fill] (a) circle (1pt);   

\draw[fill] (.5,0.35) circle (0.5pt);   
\draw[fill] (.5,-0.35) circle (0.5pt);   
\node at (.5,0.5) {$x_0$};
\node at (.5,-0.5) {$x_1$};

\draw[-stealth] (.5,0.35) -- (0.5,0.05);
\draw[-stealth] (.5,-0.35) -- (0.5,-0.05);

\node at (-1.2,0) {$ \partial R$};
\node at (1.2,0) {$ \partial R$};

\node at (0.,-.7) {Graph $R$.};

\begin{scope} [xshift=100]

\coordinate (a) at (-1.,0.);
\coordinate (b) at (0.5,0);
\coordinate (c) at (1,0);

\draw[thick,bend left=+50] (b) edge (a);
\draw[thick,bend left=-50] (b) edge (a);

\draw[thick,bend left=+50] (b) edge (c);
\draw[thick,bend left=-50] (b) edge (c);

\draw[fill] (a) circle (1pt); 
\draw[fill] (b) circle (1pt); 
\draw[fill] (c) circle (1pt); 

\node at (-1.2,0) {$ \partial R'$};
\node at (1.2,0) {$ \partial R'$};

\node at (0.5,0.2) {$V_0$};

\node at (0.,-0.7) {Graph $R'$.};

\end{scope}

\end{tikzpicture}
 \caption{Steklov-equivalent graphs via inner symmetries.}
\label{Fig84}
\end{figure}
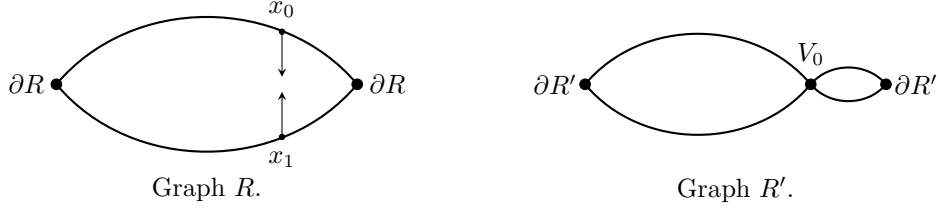

\section{Methods $3$ and $4$: swapping Steklov subspaces} \label{sec:meth3}

The third method is inspired by the example described in Section \ref{sec:swap}. We tried to generalise the idea behind.

\begin{method}[Swapping Steklov subspaces $1$] \label{Me3}
Let $ \mathbf K $ be a metric graph with a certain degenerate Steklov eigenvalue $ \mu(\lambda) $ with the eigensubspace $ V(\lambda), \; \dim V(\lambda) > 1$.
Let $ \mathbf Q_j $ be two isospectral metric graphs such that
\begin{equation}
\begin{array}{c}
\mathbb M_{\mathbf Q_1} (\lambda) \vert_{V(\lambda)} \stackrel{\rm unitary}{\sim}  \mathbb M_{\mathbf Q_2} (\lambda) \vert_{V(\lambda)} , \\[3mm]
\mathbb M_{\mathbf Q_1} (\lambda) \vert_{V(\lambda)^\perp} = \mathbb M_{\mathbf Q_2} (\lambda) \vert_{V(\lambda)^\perp} ,
\end{array} 
\end{equation}
holds, then the graphs $ \mathbf  \Gamma_j, \; j =1,2 $ obtained by gluing together $ \mathbf K $ and $ \mathbf Q_j $ are isospectral.
\end{method}
\begin{proof} Proving that $ \mathbf \Gamma_1 $ and $ \mathbf \Gamma_2 $ are isospectral we shall again use localised and delocalised
eigenfunctions. 
Our assumptions imply that the graphs $ \mathbf Q_j $ have the same Steklov eigenvalues (also the graphs do not need to be
Steklov-equivalent), in other words, the corresponding $M$-functions are
unitary-equivalent:
\begin{equation}
\mathbb M_{\mathbf Q_1} (\lambda) = \mathbb U (\lambda) \mathbb M_{\mathbf Q_2} (\lambda) \mathbb U^{-1} (\lambda),
\end{equation}
with 
$$ \mathbb U (\lambda)\vert_{V(\lambda)^\perp} = \mathbb I_{V(\lambda)^\perp}.$$
Hence it holds
$$ \mathbb M_{\mathbf K} (\lambda) = \mathbb U (\lambda) \mathbb M_{\mathbf K} (\lambda) \mathbb U^{-1} (\lambda) $$
as $ \mathbb M_{\mathbf K} (\lambda) \vert_{V(\lambda)} = \mu(\lambda) \mathbb I_{V(\lambda)}. $
It follows that Steklov eigenvalues for $ \mathbf \Gamma_1 $ and $ \mathbf \Gamma_2$ coincide between the singular points:
\begin{equation}
\underbrace{\mathbb M_{\mathbf K} (\lambda) + \mathbb M_{\mathbf Q_1} (\lambda)}_{\mathbb M_{\mathbf \Gamma_1} (\lambda)}  = 
 \mathbb U (\lambda)  \Big( \underbrace{\mathbb M_{\mathbf K} (\lambda) + \mathbb M_{\mathbf Q_1} (\lambda)}_{\mathbb M_{\mathbf \Gamma_2} (\lambda)} \Big)  \mathbb U^{-1} (\lambda),
\end{equation}
which in turn imply that the detectable spectra of $ \mathbf \Gamma_1 $ and $ \mathbf \Gamma_2 $ coincide.

Lemma \ref{Le1} implies that non-detectable spectra coincide as well, since the subgraphs $ \mathbf Q_j $ are isospectral and have the same Steklov spectra and the subgraph $ \mathbf K $
is common.
\end{proof}

Unfortunately we do not have any recipe to obtain a wide class of graphs satisfying the assumptions. Our main tool are numerous
generalisations of the example from Section \ref{sec:swap}, where the Steklov subspaces are independent of the spectral parameter $ \lambda$.
These examples are present in the next section.

It is also possible to generalise the last method by relaxing the assumption that the graphs $ \mathbf \Gamma_j$ have a common part (the graph $ \mathbf K$).

\begin{method}[Swapping Steklov subspaces $2$]
Let $ \big( \mathbf K_1,  \mathbf K_2 \big)$ and $ \big(\mathbf Q_1,  \mathbf Q_2 \big)$ be two pairs of graphs with the same number of contact points and
unitary equivalent $ M$-functions, such that
for  a certain invariant for $ \mathbb M_{\mathbf K_j} (\lambda) $ and $ \mathbb M_{\mathbf Q_j} (\lambda) $ subspace $ V(\lambda) $ it holds
\begin{equation}
\mathbb M_{\mathbf K_1} (\lambda) \vert_{V(\lambda)^\perp} =  \mathbb M_{\mathbf K_2} (\lambda) \vert_{V(\lambda)^\perp}, \quad \mathbb M_{\mathbf Q_1} (\lambda) \vert_{V(\lambda)^\perp}  =
\mathbb M_{\mathbf Q_2} (\lambda) \vert_{V(\lambda)^\perp} .
\end{equation}
Denoting by $ \mu_s ^{\mathbf K_j}(\lambda), \;  \mu_s ^{\mathbf Q_j}(\lambda), \; s = 1,2,, \dots , S$ the pairwise equal Steklov eigenvalues, let us assume
that the corresponding eigensubspaces can be chosen equal:
$$ V_s^{\mathbf K_j} (\lambda) =  V_s^{\mathbf Q_j} (\lambda) , \; s = 1,2, \dots, S,  \quad j =1,2. $$
If in addition the graphs
$$ \mathbf K_1 \cup \mathbf Q_1 \quad \mbox{and} \quad  \mathbf K_2 \cup \mathbf Q_2  $$
are isospectral, then the graphs $ \mathbf \Gamma_1$ and $ \mathbf \Gamma_2 $ obtained by gluing together $ \mathbf K_j $ and $ \mathbf Q_j $
are also isospectral
\end{method}

The proof of this method goes along the same lines.

\section{Clarifying example}

In this section we construct an example of two isospectral metric graphs using most of the formulated methods. We hope that this
example is sufficiently transparent so that it clearly illustrates all possibilities. Good examples sometimes tell more than
complete characterisation of all possibilities.

We start by constructing the graph $ \mathbf K $ with degenerate Steklov eigenvalues. This can be done using symmetries. For example let us
take the complete graph $ \mathbf K_5$ with the vertices $ V_1, \dots, V_5$, 
 where all edges are substituted with arbitrary Steklov-equivalent graphs $ \mathbf A $ and $ \mathbf  B $
having two contact points 
$$ \mathbf M^{\mathbf A} (\lambda) =  \mathbf M^{\mathbf B} (\lambda).$$
One may use for example the graphs presented in Fig. \ref{Fig84}. 
The $M$-function associated with the contact set $ \partial \mathbf K = \{ V_j \}_{j=1}^5$
commutes with the permutations, although the graph does not necessarily have any symmetry after the edges are substituted with
the graphs $ \mathbf A $ and $ \mathbf B$ (see Fig. \ref{FigEx}, the edges are marked by the red and magenta colours). To the constructed graph one may attach the star graph on $ 5 $ edges,
where again every edge is substituted with the same graphs $ \mathbf C $  having two contact points (these graphs are marked by the cyan colour). The degeneracy of the Steklov
eigenvalues persists, although their values are affected. Finally to the central vertex $ V_6 $  in the star graph
we attach arbitrary graph $ \mathbf E$ (marked by green colour in the figure). The constructed graph $ \mathbf K $ has a complicated
structure, may have no symmetries but the corresponding $M$-function associated with $ \partial \mathbf K $
for almost every $ \lambda$
has degenerate
eigenvalues of multiplicity $4$ with the corresponding eigensubspace given by $ V(\lambda) =  \mathbf 1^\perp$. 

Constructing the graph $ \mathbf K $ we could also attach Steklov-equivalent graphs to the vertices $ V_1, \dots, V_5$, or combine the two ideas. It is not our aim to
describe all possibilities here.

 \begin{figure}[h]

\begin{tikzpicture}[scale=2.]

\coordinate (a) at (-0.3,-0.05);
\coordinate (b) at (0.72,0.72);
\coordinate (c) at (-0.1,1);
\coordinate (d) at (0.75,-0.25);
\coordinate (e) at (0.25,-1);

\coordinate (f) at (-1.5,0.5);

\draw  [line width=0.5mm,magenta] (a) -- (b) node [midway, above, sloped] (TextNode) {\color{magenta} \tiny \bf A};
\draw  [line width=0.5mm,magenta] (b) -- (c) node [midway, above, sloped] (TextNode) {\color{magenta} \tiny \bf A};
\draw  [line width=0.5mm,red] (c) -- (d) node [midway, above, sloped] (TextNode) {\hspace{2mm} \color{red} \tiny \bf B};
\draw  [line width=0.5mm,red] (d) -- (e) node [midway, below, sloped] (TextNode) {\color{red} \tiny \bf B};
\draw  [line width=0.5mm,red] (e) -- (a) node [midway, above, sloped] (TextNode) {\color{red} \tiny \bf B};

\draw  [line width=0.5mm,red] (a) -- (c) node [midway, above, sloped] (TextNode) {\color{red} \tiny \bf B};
\draw  [line width=0.5mm,red] (a) -- (d) node [midway, below, sloped] (TextNode) {\color{red} \tiny \bf B};

\draw  [line width=0.5mm,magenta] (b) -- (d) node [midway, above, sloped] (TextNode) {\color{magenta} \tiny \bf A};
\draw  [line width=0.5mm,magenta] (b) -- (e) node [midway, above, sloped] (TextNode) {\hspace{2mm} \color{magenta} \tiny \bf A};

\draw  [line width=0.5mm,red] (c) -- (e) node [midway, above, sloped] (TextNode) {\hspace{-4mm} \color{red} \tiny \bf B};

\draw  [line width=0.5mm,cyan] (f) -- (a) node [midway, below, sloped] (TextNode) {\hspace{4mm} \color{cyan} \tiny \bf C};
\draw  [line width=0.5mm,cyan] (f) -- (b) node [midway, above, sloped] (TextNode) {\color{cyan} \tiny \bf C};
\draw  [line width=0.5mm,cyan] (f) -- (c) node [midway, above, sloped] (TextNode) {\color{cyan} \tiny \bf C};
\draw  [line width=0.5mm,cyan] (f) -- (d) node [midway, above, sloped] (TextNode) {\color{cyan} \tiny \bf C};
\draw  [line width=0.5mm,cyan] (f) -- (e) node [midway, above, sloped] (TextNode) {\color{cyan} \tiny \bf C};

\draw[line width=2pt,green,fill=green,opacity=0.4] (f) .. controls (-2.5,0) and (-2.5,1)  .. (f);
\node at (-2.,0.5) {\tiny  \bf E };

\draw[fill] (a) circle (1pt);       
\draw[fill] (b)circle (1pt);    
\draw[fill] (c) circle (1pt);       
\draw[fill] (d)circle (1pt);    
\draw[fill] (e) circle (1pt); 
\draw[fill] (f) circle (1pt);     

\node at (-0.1,1.1) {\tiny \bf 1};
\node at (0.74,0.8) {\tiny \bf 2};

\node at (0.85,-0.2) {\tiny \bf 3};
\node at (0.25,-1.1) {\tiny \bf 4};
\node at (-0.33,-0.15) {\tiny \bf 5};

\node at (-1.5,0.4) {\tiny \bf 6};

\node at (0,-1.5) {Graph $ \mathbf K$.};

\draw[-Triangle, line width=1.mm,dashed](1.3, 1.8) -- (0.8, 1.3);
\draw[-Triangle, line width=1.mm,dotted](1.3, -1.1) -- (0.8, -0.6);

\begin{scope} [xshift=50, yshift=50]

\coordinate (a1) at (-0.3,-0.05);
\coordinate (b1) at (0.72,0.72);
\coordinate (c1) at (-0.1,1);
\coordinate (d1) at (0.75,-0.25);
\coordinate (e1) at (0.25,-1);

\coordinate (f1) at (1.5,0.8);
\coordinate (g1) at (1.5,-0.5);

\draw  [line width=0.5mm,black] (c1) -- (f1) node [midway, above, sloped] (TextNode) {\color{black} \tiny \bf D};
\draw  [line width=0.5mm,black] (b1) -- (f1) node [midway, below, sloped] (TextNode) {\color{black} \tiny \bf D};
\draw  [line width=0.5mm,black] (a1) -- (g1) node [midway, below, sloped] (TextNode) {\hspace{2mm} \color{black} \tiny \bf D};
\draw  [line width=0.5mm,black] (d1) -- (g1) node [midway, above, sloped] (TextNode) {\color{black} \tiny \bf D};
\draw  [line width=0.5mm,black] (e1) -- (g1) node [midway, above, sloped] (TextNode) {\color{black} \tiny \bf D};

\draw[line width=2pt,blue,fill=blue,opacity=0.4] (f1) .. controls (2.5,1) and (2.5,1.5)  .. (f1);
\draw[line width=2pt,blue,fill=blue,opacity=0.4] (f1) .. controls (2.5,0.6) and (2.5,0.1)  .. (f1);

\draw[line width=2pt,blue,fill=blue,opacity=0.4] (g1) .. controls (2.5,-0.3) and (2.5,0.2)  .. (g1);
\draw[line width=2pt,blue,fill=blue,opacity=0.4] (g1) .. controls (2.5,-0.7) and (2.5,-1.2)  .. (g1);
\draw[line width=2pt,blue,fill=blue,opacity=0.4] (g1) .. controls (2.7,-0.3) and (2.7,-0.7)  .. (g1);

\node at (1.9,0.98) {\tiny  \bf F };
\node at (1.9,0.63) {\tiny  \bf F };

\node at (1.9,-0.31) {\tiny  \bf F };
\node at (2.0,-0.5) {\tiny  \bf F };
\node at (1.9,-0.69) {\tiny  \bf F };

\draw[fill] (a) circle (1pt);       
\draw[fill] (b)circle (1pt);    
\draw[fill] (c) circle (1pt);       
\draw[fill] (d)circle (1pt);    
\draw[fill] (e) circle (1pt); 
\draw[fill] (f) circle (1pt);     
\draw[fill] (g) circle (1pt);

\node at (-0.1,1.1) {\tiny \bf 1};
\node at (0.72,0.6) {\tiny \bf 2};

\node at (0.75,-0.15) {\tiny \bf 3};
\node at (0.25,-1.1) {\tiny \bf 4};
\node at (-0.3,-0.15) {\tiny \bf 5};

\node at (1.5,0.7) {\tiny \bf 6};
\node at (1.5,-0.6) {\tiny \bf 7};

\node at (1,-1.2) {Graph $ \mathbf Q_1$.};

\end{scope}

\begin{scope} [xshift=50, yshift=-50]

\coordinate (a2) at (-0.3,-0.05);
\coordinate (b2) at (0.72,0.72);
\coordinate (c2) at (-0.1,1);
\coordinate (d2) at (0.75,-0.25);
\coordinate (e2) at (0.25,-1);

\coordinate (f2) at (1.5,1);
\coordinate (g2) at (1.5,-0.5);

\draw  [line width=0.5mm,black] (c2) -- (f2) node [midway, above, sloped] (TextNode) {\color{black} \tiny \bf D};
\draw  [line width=0.5mm,black] (b2) -- (g2) node [midway, above, sloped] (TextNode) {\color{black} \tiny \bf D};
\draw  [line width=0.5mm,black] (a2) -- (g2) node [midway, below, sloped] (TextNode) {\hspace{2mm} \color{black} \tiny \bf D};
\draw  [line width=0.5mm,black] (d2) -- (g2) node [midway, above, sloped] (TextNode) {\color{black} \tiny \bf D};
\draw  [line width=0.5mm,black] (e2) -- (g2) node [midway, above, sloped] (TextNode) {\color{black} \tiny \bf D};

\draw[line width=2pt,blue,fill=blue,opacity=0.4] (f2) .. controls (2.7,0.8) and (2.7,1.2)  .. (f2);

\draw[line width=2pt,blue,fill=blue,opacity=0.4] (g2) .. controls (2.5,-0.2) and (2.5,0.3)  .. (g2);
\draw[line width=2pt,blue,fill=blue,opacity=0.4] (g2) .. controls (2.5,-0.8) and (2.5,-1.3)  .. (g2);
\draw[line width=2pt,blue,fill=blue,opacity=0.4] (g2) .. controls (2.7,-0.5) and (2.7,-0.9)  .. (g2);
\draw[line width=2pt,blue,fill=blue,opacity=0.4] (g2) .. controls (2.7,-0.5) and (2.7,-0.1)  .. (g2);

\node at (2,1) {\tiny  \bf F };

\node at (1.9,-0.29) {\tiny  \bf F };
\node at (2.0,-0.42) {\tiny  \bf F };
\node at (2.0,-0.58) {\tiny  \bf F };
\node at (1.9,-0.71) {\tiny  \bf F };

\draw[fill] (a) circle (1pt);       
\draw[fill] (b)circle (1pt);    
\draw[fill] (c) circle (1pt);       
\draw[fill] (d)circle (1pt);    
\draw[fill] (e) circle (1pt); 
\draw[fill] (f) circle (1pt);     
\draw[fill] (g) circle (1pt);

\node at (-0.1,1.1) {\tiny \bf 1};
\node at (0.72,0.6) {\tiny \bf 2};

\node at (0.75,-0.15) {\tiny \bf 3};
\node at (0.25,-1.1) {\tiny \bf 4};
\node at (-0.3,-0.15) {\tiny \bf 5};

\node at (1.5,0.9) {\tiny \bf 6};
\node at (1.5,-0.6) {\tiny \bf 7};

\node at (1,-1.2) {Graph $ \mathbf Q_2$.};

\end{scope}

\draw[dashed] (a) to (a1);
\draw[dashed] (b) to (b1);
\draw[dashed] (c) to (c1);
\draw[dashed] (d) to (d1);
\draw[dashed] (e) to (e1);

\draw[thick,dotted] (a) to (a2);
\draw[thick,dotted] (b) to (b2);
\draw[thick,dotted] (c) to (c2);
\draw[thick,dotted] (d) to (d2);
\draw[thick,dotted] (e) to (e2);

\end{tikzpicture}
 \caption{{Isospectral graphs by gluing.\newline
 The numbers indicate positions of different vertices.}}
\label{FigEx}
\end{figure}
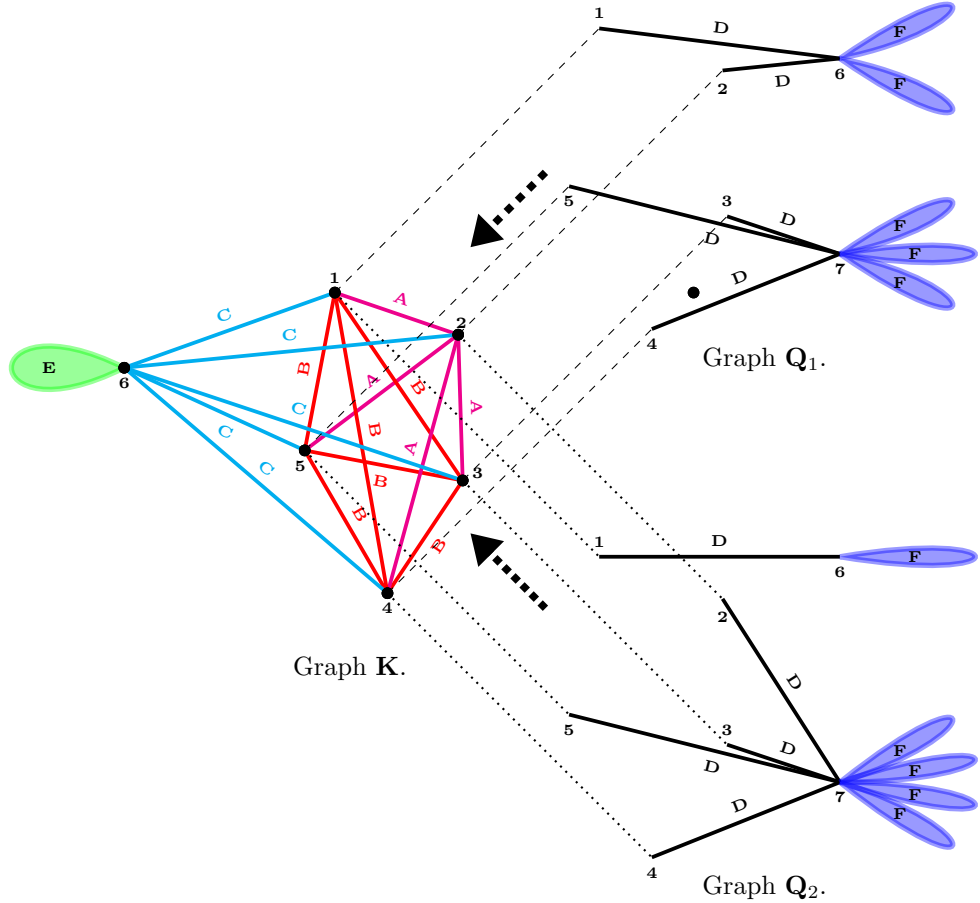

To the graph $ \mathbf K $ we shall add two different graphs $ \mathbf Q_j $ constructed by spitting the star
graph on five edges. Consider the $5$-star graph where degree $1$ vertices are denoted by $ V_1, \dots, V_5$
and with all edges substituted with the graphs $ \mathbf D $
having two contact points (marked by the back colour in the figure). 
Then each of the  graphs $ \mathbf Q_j $ is formed by splitting the central vertex in the star graph
into the vertices $ V_6 $ and $V_7 $ having valencies either $2 $ and $3$, or $ 1$ and $4$. 
To each of these vertices $ V_j , \; j =6,7,$ we attach as many copies of an arbitrary graph $ \mathbf F $
(marked by the blue colour in the figure) as the number of graphs $ D $ joined at the vertex.

The contact set for $ \mathbf Q_j $ is formed by the first five vertices $ \partial \mathbf Q_j = \{ V_j\}_{j=1}^5$,
but these vertices are naturally divided into two sets corresponding to the two connected components
in each of the graphs.
For each connected component
 the  vector $ \mathbf 1 $ (having dimension $ 1,2,3,$ or $4$) is not only a Steklov eigenvector (for almost any $ \lambda$) but
the corresponding Steklov eigenvalue does not depend on the dimension. To achieve that different number of graphs $ \mathbf F $
were attached.
The orthogonal complement is
again a Steklov eigensubspace with the eigenvalue equal to the $M$-function for the graph $ \mathbf D $
with Dirichlet condition introduced at the opposite contact point. 

Gluing pairwise the contact vertices in $ \mathbf K $ and $ \mathbf Q_j$, $ j=1,2,$ we get two isospectral graphs $ \Gamma_1 $ and $\Gamma_2$.
These graphs may have no symmetries. One may even take the graphs $ \mathbf A, \mathbf B, \mathbf C ,$ and $ \mathbf D $
Steklov-equivalent but having different  invisible eigenvalues. Then it is possible to exchange
these graphs in arbitrary way destroying all remains of possibly existing symmetries. The same argument
applies to graphs $ \mathbf E $
and $ \mathbf F$: for example one may take $5$ Steklov-equivalent graphs $ \mathbf F_j, \j=1, \dots 5$ and attach them in
arbitrary manner to the vertices $ V_6 $ and $V_7$ (of course keeping the number equal to the degree).

Moreover, creating isospectral graphs one may use not the same graphs $ \mathbf K $, but the graphs with exchanged
Steklov-equivalent building blocks -- the graphs $ \mathbf A, \mathbf B, \mathbf C, $ and $ \mathbf D$. 

It is clear that although the constructed graphs may formally not possess any symmetry, they are still symmetric
if one just looks at the involved $M$-functions for the building blocks used in the construction.   
One may treat separately the eigenvalues with the eigenfunctions supported by single blocks
(invisible eigenvalues) and the rest of the spectrum.

The presented example illustrates the enormous possibilities to construct isospectral graphs provided by our methods.
We plan to return back to this question in one of our forthcoming publications.

\section{Comparison to alternative constructons}

The presented construction determines not only isospectral metric graphs, but also discrete graphs isospectral
with respect to the normalised Laplacian -- it is enough to reduce our construction to unilateral metric graphs and use von Below
formula \cite{Below1} connecting the spectra of the normalised and differential Laplacians on these graphs. 
Let us briefly describe connections of our approach to two existing methods
to obtain isospectral graphs.

\begin{itemize}

\item {\bf Sunada approach} originally suggested in \cite{Su}, broadly applied to discrete graphs and in 
particular to metric graphs in \cite{GuSm}.

This approach uses large symmetric graphs to obtains isospectral pairs. Each graph
in the pair can be seen as a subgraph of the original large graph.
Considering the simplest isospectral pair presented in Fig. \ref{Fig82} we failed
to find any explanation via the Sunada's approach.
It might be interesting to  prove such impossibility rigorously.

\item {\bf Seidel's switching}  following \cite{Seidel,Br,BuGr}.

This construction starts from two regular graphs, which are joined together in two different ways
 (see {\it e.g.}  Fig. 6 and 7 in \cite{Br}).
They are different from the presented isospectral graphs as regularity does not play any role in
our construction.

Moreover isospectrality of normalised the Laplacians associated with discrete graphs does not
necessarily imply isospectrality of the differential Laplacian on the metric graphs obtained by
providing unit length to all edges. We have the following:

\begin{proposition}
Let $\mathbf{\Gamma}_1,\mathbf{\Gamma}_2$ be two unilateral metric graphs, and let $G_1,G_2$ be the corresponding discrete graphs. For $j=1,2$, let $L(\mathbf{\Gamma}_j)$ denote the standard differential
 Laplacian on $\mathbf{\Gamma}_j$, and let $\mathcal{L}_N(G_j)$ denote the normalised Laplacian on $G_j$. Then
\begin{equation*}
\sigma(L(\mathbf{\Gamma}_1))=\sigma(L(\mathbf{\Gamma}_2))\qquad\Leftrightarrow\qquad
\left\{
\begin{array}{l}
\sigma(\mathcal{L}_N(G_1))=\sigma(\mathcal{L}_N(G_2)), \\[3mm]
\beta_1(G_1)=\beta_1(G_2),
\end{array} \right.
\end{equation*}
where $ \sigma$ denotes the spectrum of the corresponding operator and $ \beta_1 $ denotes  the first Betti number
(the number of the independent cycles).
\end{proposition}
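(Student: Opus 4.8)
The plan is to compare the two metric spectra level set by level set in $k=\sqrt{\lambda}$, reducing each value of $\lambda$ to spectral and topological data of the underlying discrete graph, and then to argue injectivity of this reduction. First I would split the analysis according to whether $\sin k$ vanishes. On the non-exceptional set $k\notin\pi\mathbb{Z}$ the secular problem on the equilateral graph is, after eliminating the edge degrees of freedom, equivalent to $D^{-1}A\psi=\cos k\,\psi$; hence by the von Below relation $1-\cos\sqrt{\lambda}=\mu$ \cite{Below1} the eigenvalues $\lambda=k^2$ with $k\notin\pi\mathbb{Z}$ are in multiplicity-preserving correspondence with the eigenvalues $\mu\in(0,2)$ of $\mathcal{L}_N(G)$: each interior $\mu$ produces the family $k\in\{2\pi j\pm k_0\}$ with $k_0=\arccos(1-\mu)\in(0,\pi)$. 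Since $\mu\mapsto k_0$ is a bijection of $(0,2)$ onto $(0,\pi)$, the portion $\sigma(L(\mathbf{\Gamma}))\cap(0,\pi^2)$ recovers exactly the interior eigenvalues of $\mathcal{L}_N(G)$ with multiplicity, and none of the families meets an exceptional point.

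Next I would treat the exceptional values $k\in\pi\mathbb{Z}$ directly, since the reduction degenerates there. Writing the solution on each unit edge as $a\cos(kx)+b\sin(kx)$ and imposing continuity and the Kirchhoff condition, one computes the total multiplicity at each exceptional $\lambda$. At $k=2\pi m$ continuity forces the common vertex value to be locally constant (the $\mu=0$ eigenspace, of dimension $c$ equal to the number of components), while the free edge coefficients obeying the flow condition span the cycle space of dimension $\beta_1$; so the multiplicity of $\lambda=(2\pi m)^2$ equals $c+\beta_1$. At $k=\pi(2m-1)$ continuity forces $a_{v_2}=-a_{v_1}$ across each edge (the $\mu=2$ eigenspace, of dimension $b$ equal to the number of bipartite components), while the invisible part is the kernel of the unsigned incidence matrix, of dimension $\beta_1-c+b$ (the nullity of the signless Laplacian $D+A$ equals the number of bipartite components, so the rank is $|V|-b$); hence the multiplicity of $\lambda=(\pi(2m-1))^2$ equals $\beta_1-c+2b$. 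Both $c$ and $b$ are themselves encoded in $\sigma(\mathcal{L}_N(G))$ as the multiplicities of the eigenvalues $0$ and $2$.

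With these formulas the two directions follow. For the implication from the right to the left, if $\sigma(\mathcal{L}_N(G_1))=\sigma(\mathcal{L}_N(G_2))$ and $\beta_1(G_1)=\beta_1(G_2)$, then $c$ and $b$ agree, the interior families agree, and the two exceptional multiplicities $c+\beta_1$ and $\beta_1-c+2b$ agree, so the metric spectra coincide as multisets. For the converse, from $\sigma(L(\mathbf{\Gamma}))$ alone I recover the interior $\mu$ from the part in $(0,\pi^2)$, the number of components $c$ from the multiplicity of $\lambda=0$, then $\beta_1$ from $(\text{multiplicity at }(2\pi)^2)-c$ and $b$ from $\tfrac12\big((\text{multiplicity at }\pi^2)-\beta_1+c\big)$; this reconstructs the full multiset $\sigma(\mathcal{L}_N(G))$ together with $\beta_1(G)$, so equality of the metric spectra forces equality of both discrete data. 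Alternatively, that $\beta_1$ is spectrally determined may be quoted from \cite{KuJFA,KuArk}.

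The main obstacle will be the exceptional set $k\in\pi\mathbb{Z}$, and within it the odd multiples of $\pi$, where the invisible multiplicity is governed by the signless Laplacian and the bipartite structure rather than by the cycle space; getting the separation of detectable versus non-detectable multiplicities exactly right is the crux, whereas the non-exceptional correspondence is routine. As a sanity check I would verify the formulas $c+\beta_1$ and $\beta_1-c+2b$ against the exponents of $(z-1)$ and $(z+1)$ in the secular polynomials of $\mathbf{K}_5$ and of $\Gamma_j$ computed in Section \ref{sec:ex}, which indeed give $1+6$, $1+5$ for the even case and $6-1$, $5-1$ for the (non-bipartite, $b=0$) odd case.
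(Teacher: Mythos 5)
Your proposal is correct, and in fact it supplies a proof that the paper never writes down: the Proposition is stated without proof, being implicitly justified by von Below's relation \cite{Below1} quoted in the introduction together with the remark made there that isospectrality of unilateral metric graphs is equivalent to isospectrality of the normalised Laplacians plus equality of the number of components and of independent cycles. Your argument is exactly the one this implicit justification calls for, with the details filled in correctly: the multiplicity-preserving correspondence between $k\notin\pi\mathbb{Z}$ and the interior eigenvalues $\mu\in(0,2)$ of $\mathcal{L}_N$ via $1-\cos k=\mu$; the multiplicity $c+\beta_1$ at $\lambda=(2\pi m)^2$, $m\geq 1$ (locally constant vertex data plus the cycle space); and the multiplicity $\beta_1-c+2b$ at odd multiples of $\pi$, where your identification of the invisible part with the kernel of the unsigned incidence matrix, of dimension $E-|V|+b=\beta_1-c+b$ by the signless-Laplacian rank argument, is sound --- note also that the Kirchhoff constraint on the sine coefficients decouples from the cosine (vertex-value) part at these $k$, so the detectable/invisible split really is a direct sum, which your count tacitly uses. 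The reconstruction logic in both directions is then airtight, and your sanity check against the exponents of $(z-1)$ and $(z+1)$ in the secular polynomials of $\mathbf{K}_5$ and of $\Gamma_{1,2}$ ($7=1+6$, $6=1+5$, $5=6-1$, $4=5-1$) confirms the formulas. One caveat worth recording: like the cited von Below relation itself, your reduction assumes the discrete graphs carry no loops (multiple edges are harmless, since incidence matrices, cycle space and bipartiteness all make sense for multigraphs); for unilateral graphs with loops --- such as the figure-eight graph, which the paper exhibits as half of its simplest isospectral pair --- the conventions for $A$ and $D$ must be fixed before the statement can even be interpreted, and the odd-$\pi$ count needs a separate (easy) verification.
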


As a consequence of this proposition, the two graphs $ \Delta_1 $ and $ \Delta_2$ presented in Section 5 of \cite{BuGr} 
do not lead to isospectral metric graphs
 as they have different numbers of edges. They are only isospectral as discrete graphs.

\end{itemize}

\end{document}